\newtheorem{thm}{Theorem}
\newtheorem{lem}[thm]{Lemma}
\newtheorem{prop}[thm]{Proposition}
\def\R{\mathbb R}
\def\C{\mathbb C}
\def\D{\mathbb D}
\def\cR{\mathcal R}
\def\cK{\mathcal K}
\def\F{\mathcal F}
\def\B{\mathcal B}
\def\supp{\operatorname{supp}}
\def\d{\partial_z}
\def\dbar{\partial_{\overline{z}}}
\def\Id{\mathbf{Id}}
\title{\sc{Beltrami equations with coefficient \\in the fractional Sobolev space $W^{\theta, \frac2{\theta}}$} 
\footnotetext{\hspace{-0.35cm}
2010 \emph{Mathematics Subject Classification}. 30C62, 35J46, 42B20, 42B37
\endgraf
{\it Key words and phrases}. 
Quasiconformal mapping, Beltrami equation, Fractional Sobolev spaces, Beltrami operators.
}}
\author{\it Antonio L. Bais\' on, Albert Clop and Joan Orobitg.}
\date{}
\begin{document}
\maketitle

\begin{abstract}
In this paper, we look at quasiconformal solutions $\phi:\C\to\C$ of Beltrami equations
$$
\dbar \phi(z)=\mu(z)\,\d \phi (z).
$$
where $\mu\in L^\infty(\C)$ is compactly supported on $\D$, $\|\mu\|_\infty<1$ and belongs to the fractional Sobolev space $W^{\alpha, \frac2\alpha}(\C)$. Our main result states  that
$$\log\d\phi \in W^{\alpha, \frac2\alpha}(\C)$$
whenever $\alpha>\frac12$. Our method relies on an $n$-dimensional result, which asserts the compactness of the commutator 
$$[b,(-\Delta)^\frac{\beta}{2}]:L^\frac{np}{n-\beta p}(\R^n)\to L^p(\R^n)$$
between the fractional laplacian $(-\Delta)^\frac\beta2$ and any symbol $b\in W^{\beta,\frac{n}\beta}(\R^n)$, provided that $1<p<\frac{n}{\beta}$.
\end{abstract}

\section{Introduction}

\noindent

\noindent
A Beltrami coefficient is a function $\mu\in L^\infty(\C)$ with $\|\mu\|_\infty<1$. By the well-known Measurable Riemann Mapping Theorem, to every compactly supported Beltrami coefficient $\mu$ one can associate a unique homeomorphism $\phi:\C\to\C$ in the local Sobolev class $W^{1,2}_{loc}$ such that the \emph{Beltrami equation} 
$$\dbar\phi(z) = \mu(z)\,\d\phi(z)$$ 
holds for almost every $z\in\C$, and at the same time, $|\phi(z)-z|\to 0$ as $|z|\to\infty$. One usually calls $\phi$ the principal solution, and it is known to be a $K$-quasiconformal map with $K=\frac{1+\|\mu\|_\infty}{1-\|\mu\|_\infty}$, since
$$|\dbar\phi(z)|\leq \frac{K-1}{K+1}|\d\phi(z)|  \qquad\text{at almost every $z\in\C$.}$$
Recent works have shown an interest in describing the Sobolev smoothness of $\phi$ in terms of that of $\mu$. As noticed already at \cite{CFMOZ}, remarkable differences are appreciated under the assumption $\mu \in W^{\alpha,p}$,  
depending on if $\alpha p< 2$, $\alpha p=2$ or $\alpha p>2$. In this paper, we focus our attention on the case $\alpha p=2$.\\
\\
It was proven at \cite{CFMOZ} that if $\mu\in W^{1,2}$ then $\phi$ belongs to the local Sobolev space $W^{2,2-\epsilon}_{loc}$ for each $\epsilon>0$ 
(and further one cannot take $\epsilon=0$ in general). The proof was based on the elementary fact that
\begin{equation}\label{cfmozlog}
\mu\in W^{1,2}\hspace{1cm}\Rightarrow\hspace{1cm}\log(\d\phi)\in W^{1,2}.
\end{equation}
In particular, $\log\d\phi$ enjoys a slightly better degree of smoothness than $\d\phi$ itself. 
It is a remarkable fact that this better regularity cannot be deduced only from the fact that $\d\phi\in W^{1,2-\epsilon}_{loc}$ for every $\epsilon>0$. 
Somehow, this means that $\log\d\phi$ contains more information than $\d\phi$.\\ 
\\
Similar phenomenon had been observed much earlier in the work of Hamilton \cite{DH}, where it is shown that
\begin{equation}\label{hamilton}
\mu\in VMO\hspace{1cm}\Rightarrow\hspace{1cm}\log(\d\phi)\in VMO.
\end{equation}
Again, the $VMO$ smoothness of $\log(\d\phi)$ cannot be completely transferred to $\d\phi$ itself. Indeed, the example $\phi(z)=z\,(\log|z|-1)$, 
in a neighbourhood of the origin, has $VMO$ Beltrami coefficient (at least locally) but clearly $D\phi\notin VMO$. 
\\
The $VMO$ setting is interesting in our context since it can be seen as the limiting space of $W^{\alpha,\frac2\alpha}$. 
Certainly, the complex method of interpolation shows that
$$[VMO, W^{1,2}]_\alpha=W^{\alpha,\frac2\alpha}, \hspace{1cm}0<\alpha<1$$ 
(see for instance \cite{RR}). Thus, it is natural to ask if a counterpart to implication \eqref{cfmozlog} holds in $W^{\alpha,\frac2\alpha}$. In the present paper, we prove the following theorem.

\begin{thm}\label{Teo1}
Let $\alpha\in(\frac{1}{2} , 1)$. Let $\mu$ be a Beltrami coefficient with compact support and such that $\mu\in W^{\alpha,\frac{2}{\alpha}}(\C)$. Let $\phi$ be the principal solution to the $\C$-linear Beltrami equation
$$\dbar\phi\,=\,\mu\,\d\phi\,.$$
Then, $\log\left(\partial\phi\right)\in W^{\alpha,\frac{2}{\alpha}}\left(\C\right)$.
\end{thm}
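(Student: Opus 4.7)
The first step is to derive the PDE for $h:=\log(\d\phi)$, which makes sense because $\d\phi$ does not vanish by quasiconformality. Differentiating $\dbar\phi=\mu\,\d\phi$ in $z$ and dividing by $\d\phi$ yields
\begin{equation*}
\dbar h \;=\; \mu\,\d h + \d\mu,
\end{equation*}
or equivalently, using the Beurling transform $B$ and the identity $\d h=B\dbar h$,
\begin{equation*}
(I-\mu B)\,\dbar h \;=\; \d\mu.
\end{equation*}
The target $h\in W^{\alpha,2/\alpha}(\C)$ is equivalent to $\dbar h\in W^{\alpha-1,2/\alpha}(\C)$, which via the Riesz potential characterization of negative fractional Sobolev spaces becomes $v:=I_{1-\alpha}\dbar h\in L^{2/\alpha}(\C)$, where $I_{1-\alpha}=(-\Delta)^{-(1-\alpha)/2}$.

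Next I would apply $I_{1-\alpha}$ to the above equation. Since $I_{1-\alpha}$ commutes with $B$, the only non-trivial term is $I_{1-\alpha}(\mu B\dbar h)=\mu Bv+[I_{1-\alpha},\mu]\,\d h$. The elementary operator identity
\begin{equation*}
[I_{1-\alpha},\mu] \;=\; -I_{1-\alpha}\,[P,\mu]\,I_{1-\alpha}, \qquad P:=(-\Delta)^{(1-\alpha)/2},
\end{equation*}
together with $I_{1-\alpha}\d h=Bv$, turns that commutator into $-I_{1-\alpha}[P,\mu]Bv$, leading to the $L^{2/\alpha}$-equation
\begin{equation*}
\bigl((I-\mu B)+I_{1-\alpha}[P,\mu]B\bigr)\,v \;=\; I_{1-\alpha}\,\d\mu.
\end{equation*}
The right-hand side lies in $L^{2/\alpha}(\C)$, since up to a bounded Riesz transform $I_{1-\alpha}\d\mu$ equals $(-\Delta)^{\alpha/2}\mu$, and this is in $L^{2/\alpha}$ by hypothesis.

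The decisive point is that the perturbation $K:=I_{1-\alpha}[P,\mu]B$ is compact on $L^{2/\alpha}(\C)$. The assumption $\alpha>1/2$ enters via the critical-line Sobolev embedding $W^{\alpha,2/\alpha}(\C)\hookrightarrow W^{1-\alpha,2/(1-\alpha)}(\C)$, which holds exactly because $\alpha\ge 1-\alpha$; this places $\mu$ in the symbol class required by the commutator theorem announced in the abstract, with $n=2$ and $\beta=1-\alpha$. Choosing $p=2$ in that theorem (the exponent for which $np/(n-\beta p)=2/\alpha$) gives $[P,\mu]:L^{2/\alpha}\to L^2$ compact; composing with $B$ bounded on $L^{2/\alpha}$ and with the Hardy--Littlewood--Sobolev embedding $I_{1-\alpha}:L^2\to L^{2/\alpha}$ makes $K$ compact on $L^{2/\alpha}$. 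On the other hand, $I-\mu B$ is invertible on $L^{2/\alpha}(\C)$: the embedding $W^{\alpha,2/\alpha}\hookrightarrow VMO$ combined with $\|\mu\|_\infty<1$ delivers invertibility on every $L^p$ with $1<p<\infty$ via the Iwaniec--Hamilton theory of VMO commutators (compactness of $[\mu,B]$ and Fredholm perturbation from the $L^2$-case). Hence $(I-\mu B)+K$ is Fredholm of index zero on $L^{2/\alpha}$, and solvability reduces to injectivity.

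Injectivity is where I expect the delicate work to lie. The plan would be to regularize $\mu$ by $\mu_n\in C^\infty_c$ converging in $W^{\alpha,2/\alpha}$ with $\|\mu_n\|_\infty$ uniformly below $1$; solve the corresponding smooth equations with their unique solutions $v_n$; obtain uniform $L^{2/\alpha}$-bounds using the uniform commutator norm $\|[P,\mu_n]\|_{L^{2/\alpha}\to L^2}\lesssim\|\mu_n\|_{W^{\alpha,2/\alpha}}$ together with the uniform invertibility of $I-\mu_n B$; and identify the limit with $I_{1-\alpha}\dbar h$ through the uniqueness of the principal solution to the Beltrami equation. The main obstacle, and the true heart of the paper, is the compactness statement $[P,\mu]:L^{2/\alpha}\to L^2$ for $\mu\in W^{1-\alpha,2/(1-\alpha)}$, namely the $n$-dimensional commutator theorem announced in the abstract; its proof will rest on fractional Leibniz-type estimates adapted to the critical Sobolev exponent $\alpha p=n$.
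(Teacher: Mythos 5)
Your proposal reproduces the paper's argument almost line for line: the same equation $(\Id-\mu\B)\dbar h=\d\mu$ for $h=\log\d\phi$, the same conjugated operator $T_\mu=I_{1-\alpha}(\Id-\mu\B)D^{1-\alpha}$ (you phrase it via $v=I_{1-\alpha}\dbar h$, the paper via $D^\alpha g$ and the Riesz transforms $\cR_1\pm i\cR_2$ --- the same thing up to a constant), the same splitting into $\Id-\mu\B$ plus a commutator-driven compact perturbation, the same use of Theorem~\ref{cpct} with $\beta=1-\alpha$, $n=2$, $p=2$, the same Fredholm--index--zero argument, and the same smooth approximation $\mu_n\to\mu$ with weak-$*$ compactness to identify the limit with $\log\d\phi$. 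Two small remarks: first, for injectivity of $T_\mu$ on $L^{2/\alpha}$ the paper does not regularize but instead bootstraps directly, showing that any $F\in\ker T_\mu$ satisfies $(\Id-\mu\B)F=I_{1-\alpha}[\mu,D^{1-\alpha}]\B F\in\dot W^{1-\alpha,2}$ and then invoking Proposition~\ref{invertib} and the classical $L^2$ theory; the regularization you sketch is instead the paper's final identification step. Second, the inclusion $W^{\alpha,2/\alpha}\hookrightarrow W^{1-\alpha,2/(1-\alpha)}$ you invoke is not a plain Sobolev embedding (both spaces lie on the critical line $sp=n$ and neither embeds in the other in general); it holds here because $\mu$ is additionally bounded, via a Gagliardo--Nirenberg interpolation between $L^\infty$ and $\dot W^{\alpha,2/\alpha}$ with exponent $\theta=(1-\alpha)/\alpha\le 1$ --- precisely where $\alpha>\tfrac12$ is used --- a point the paper also states only briefly.
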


\noindent
The proof of Theorem \ref{Teo1} is based on two facts. The first one is the following a priori estimate for linear Beltrami equations with coefficients belonging to $W^{\alpha, \frac2\alpha}(\C)$.

\begin{thm}\label{apriori}
Let $\alpha\in (0,1)$ and $1<p<\frac2\alpha$. Let $\mu,\nu$ be a pair of Beltrami coefficients with compact support, such that $\||\mu|+|\nu|\|_\infty\leq k<1$ and  $\mu,\nu\in W^{\alpha,\frac{2}{\alpha}}(\C)$. For every $g\in W^{\alpha,p}(\C)$ the equation
$$\dbar f - \mu\,\d f -\nu\,\overline{\d f}=g$$
admits a solution $f$ with $Df\in W^{\alpha p}(\C)$, unique modulo constants, and such that the estimate
$$\|Df\|_{W^{\alpha,p}(\C)}\leq C\,\|g\|_{W^{\alpha,p}(\C)}$$
holds for a constant $C$ depending only on $k$, $\|\mu\|_{W^{\alpha, \frac2\alpha}(\C)}$ and $\|\nu\|_{W^{\alpha, \frac2\alpha}(\C)}$.
\end{thm}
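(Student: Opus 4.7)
The plan is to reduce the PDE to a fixed-point equation for the Beurling transform $\mathcal{B}$, solve it via the classical $L^q$-theory for VMO Beltrami coefficients, and then bootstrap the solution up to $W^{\alpha,p}$-regularity by applying the fractional Laplacian $(-\Delta)^{\alpha/2}$ to both sides and invoking the commutator bound announced in the abstract.

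\textbf{Reformulation and $L^q$ solvability.} First I would set $h:=\dbar f$; the Cauchy-transform identity $\d f=\mathcal{B} h$ (modulo constants) turns the PDE into
$$(I-T)\,h\,=\,g, \qquad T\,h\,:=\,\mu\,\mathcal{B} h\,+\,\nu\,\overline{\mathcal{B} h}.$$
The critical embedding $W^{\alpha,\frac{2}{\alpha}}(\C)\hookrightarrow \mathrm{VMO}(\C)$ places $\mu$ and $\nu$ in the class of compactly supported VMO functions with $\||\mu|+|\nu|\|_\infty\le k<1$, so by the classical theory (Iwaniec / Astala--Iwaniec--Saksman) the operator $I-T:L^q(\C)\to L^q(\C)$ is a topological isomorphism for every $q\in(1,\infty)$. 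Since $\alpha p<2$ yields the Sobolev embedding $W^{\alpha,p}\hookrightarrow L^{p^*}$ with $p^*:=\tfrac{2p}{2-\alpha p}$, the element $h:=(I-T)^{-1}g$ is well-defined in $L^p\cap L^{p^*}$, with $\|h\|_{L^p}+\|h\|_{L^{p^*}}\le C\|g\|_{W^{\alpha,p}}$.

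\textbf{Bootstrap via the fractional Laplacian.} Next, applying $(-\Delta)^{\alpha/2}$ to $(I-T)h=g$ and using that $\mathcal{B}$ and complex conjugation both commute with $(-\Delta)^{\alpha/2}$, together with the Leibniz-commutator identity $(-\Delta)^{\alpha/2}(\mu v)=\mu\,(-\Delta)^{\alpha/2}v+[(-\Delta)^{\alpha/2},\mu]\,v$, one derives
$$(I-T)\,H \,=\, (-\Delta)^{\alpha/2}g \,+\, K\,h,\qquad H\,:=\,(-\Delta)^{\alpha/2}h,$$
where $Kh\,:=\,[(-\Delta)^{\alpha/2},\mu]\,\mathcal{B} h\,+\,[(-\Delta)^{\alpha/2},\nu]\,\overline{\mathcal{B} h}$. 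By the commutator estimate from the theorem announced in the abstract (specialised to $n=2$, $\beta=\alpha$; here only the boundedness part is needed, not compactness), $K:L^{p^*}\to L^p$ is bounded, with norm controlled by $\|\mu\|_{W^{\alpha,2/\alpha}}+\|\nu\|_{W^{\alpha,2/\alpha}}$. Hence the right-hand side lies in $L^p$, and inverting $I-T$ on $L^p$ gives
$$\|H\|_{L^p}\,\le\, C\bigl(\|g\|_{W^{\alpha,p}}+\|h\|_{L^{p^*}}\bigr)\,\le\, C\,\|g\|_{W^{\alpha,p}}.$$
Combined with $\|h\|_{L^p}\le C\|g\|_{W^{\alpha,p}}$ and the $W^{\alpha,p}$-boundedness of $\mathcal{B}$, this produces the desired estimate $\|Df\|_{W^{\alpha,p}}\le C\|g\|_{W^{\alpha,p}}$.

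\textbf{Uniqueness and main obstacle.} For uniqueness modulo constants, the difference of two solutions $f_1,f_2$ yields a function $h_0:=\dbar(f_1-f_2)\in L^p$ in the kernel of $I-T$; the isomorphism property of Step~1 forces $h_0=0$, hence $\d(f_1-f_2)=\mathcal{B} h_0=0$ as well, and $f_1-f_2$ is constant. The principal obstacle is securing the commutator estimate at the critical scaling $b\in W^{\alpha,\frac{2}{\alpha}}$; once this is available, the rest is a fairly standard adaptation of the $L^p$ Beltrami machinery to the fractional Sobolev scale.
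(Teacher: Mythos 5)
Your overall route matches the paper's: reduce to $(\Id-T)\dbar f = g$, use the VMO/$L^q$ theory of Iwaniec to solve in $L^{p^*}$ with $p^*=\tfrac{2p}{2-\alpha p}$, apply $D^\alpha$ and absorb the non-commutativity via the Kenig--Ponce--Vega commutator bound. The uniqueness argument is also the paper's. So you have correctly identified the mechanism.

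There is, however, a genuine gap at the bootstrap step. You write the equation $(\Id-T)H = D^\alpha g + Kh$ with $H := D^\alpha h$, note that the right-hand side lies in $L^p$, and then ``invert $\Id-T$ on $L^p$.'' That inversion produces \emph{some} $\tilde H\in L^p$ solving $(\Id-T)\tilde H = D^\alpha g + Kh$; it does not by itself show that $\tilde H = D^\alpha h$, because a priori $D^\alpha h$ is only a tempered distribution (you only know $h\in L^{p^*}$), and $\Id-T$ being injective on $L^p$ tells you nothing about its kernel on the larger class of distributions where $D^\alpha h - \tilde H$ lives. The paper avoids this by proving first (its Proposition~6) that $\Id-\mu\B-\nu\overline{\B}$ is \emph{invertible} on $\dot W^{\alpha,p}(\C)$ --- via a Fredholm-index-zero argument that conjugates by $D^\alpha$, $I_\alpha$ and exploits the \emph{compactness} of the commutator $[D^\alpha,\mu]$, not merely its boundedness --- and then identifies the $\dot W^{\alpha,p}$ solution with $\dbar f$ through the known uniqueness on $L^{p^*}$. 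Your remark that ``only the boundedness part is needed, not compactness'' is therefore premature as the argument stands: either you supply a density/approximation argument (mollify $\mu,\nu,g$, check the commutator constants are uniform, and pass to a weak limit), in which case boundedness does suffice, or you run the Fredholm argument, in which case you do need compactness. One of these two inputs must be added before the bootstrap closes.
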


\noindent
Theorem \ref{apriori} is sharp, in the sense that one cannot take $p=\frac2\alpha$. Thus, Theorem \ref{Teo1} shows that $\log\d\phi$ enjoys better regularity than $\d\phi$ itself. \\
\\
The study of logarithms of derivatives of quasiconformal maps goes back to the work of Reimann \cite{R}, 
where it was shown that the real-valued logarithm $\log|\d\phi|\in BMO$ whenever $\|\mu\|_\infty<1$. 
References involving the complex logarithm $\log\d\phi$ also lead to \cite{Ahl}. More recently, in \cite{AIPS} the authors obtained sharp bounds for the $BMO$ norm of $\log\d\phi$ also with the only assumption $\|\mu\|_\infty<1$.\\
\\
The second main ingredient in the proof of Theorem \ref{Teo1} is a compactness result for commutators of pointwise multipliers and the fractional laplacian, which holds in higher dimensions and has independent interest. In order to state it, given a measurable function $u:\R^n\to\R$ we denote
\begin{equation}\label{derifrac}
 D^\beta u (x):=
\lim_{\epsilon\to 0}
C_{n,\beta}
\int_{|x-y|>\epsilon}\frac{u(x)-u(y)}{|x-y|^{n+\beta}}\,dy.
\end{equation}
This is a principal value representation of the fractional laplacian $(-\Delta)^\frac{\beta}2$, whose symbol at the Fourier side is
$$
\widehat{D^\beta u}(\xi)= \widehat{(-\Delta)^{\frac{\beta}2}u}(\xi)=|\xi|^\beta\,\hat{u}(\xi).
$$
The operator $D^\beta$ can also be seen as the formal inverse of $I_\beta$, the classical Riesz potential of order $\beta$, which can be represented as
$$\widehat{I_\beta u}(\xi)=|\xi|^{-\beta}\,\hat{u}(\xi).$$
With this notation, a function $u$ belongs to $W^{\beta,p}$, $1<p<\infty$, if and only if $u$ and $D^\beta u$ belong to $L^p$, with the corresponding equivalent norm. Analogously, $u\in \dot{W}^{\beta,p}$ if and only if $D^\beta u\in L^p$.
\\
Let us remind that if $T$ and $S$ are two operators, one usually calls $[T, S]=T\circ S - S\circ T$ the \emph{commutator} of $T$ and $S$. 

\begin{thm}\label{cpct}
Let $\beta\in(0,1)$ and $b\in W^{\beta,\frac{n}{\beta}}(\R^n)$. Then, the commutator 
$$
[b, D^\beta] : L^\frac{np}{n-\beta p}(\R^n)\to\,L^p(\R^n)\\
$$
is bounded and compact whenever $1<p<\frac{n}{\beta}$.
\end{thm}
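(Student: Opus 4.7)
The proof naturally splits into two parts: first, norm boundedness of the commutator $[b,D^\beta]:L^r(\R^n)\to L^p(\R^n)$ with $r=\frac{np}{n-\beta p}$; and second, an upgrade to compactness by approximating $b$ by smooth compactly supported symbols and applying the Fr\'echet--Kolmogorov compactness criterion to the approximating operators.

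For the boundedness I would decompose
$$
[b,D^\beta]f \;=\; \bigl(D^\beta(bf)-b\,D^\beta f-f\,D^\beta b\bigr)+f\,D^\beta b,
$$
and handle the two pieces separately. The second piece is controlled by H\"older's inequality: since $\frac{1}{p}=\frac{1}{r}+\frac{\beta}{n}$,
$$
\|f\,D^\beta b\|_p\leq\|f\|_r\,\|D^\beta b\|_{n/\beta}.
$$
The first piece is the three-term Kato--Ponce commutator, for which the fractional Leibniz rule of Kato--Ponce, in the form sharpened by Grafakos--Oh, yields
$$
\|D^\beta(bf)-b\,D^\beta f-f\,D^\beta b\|_p\,\lesssim\,\|D^\beta b\|_{n/\beta}\,\|f\|_r
$$
in the admissible range $1<p<n/\beta$. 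Combined, these two estimates give $\|[b,D^\beta]\|_{L^r\to L^p}\lesssim\|b\|_{\dot{W}^{\beta,n/\beta}}$.

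For the compactness step, the density of $C_c^\infty(\R^n)$ in $W^{\beta,n/\beta}(\R^n)$ supplies a sequence $b_m\to b$ in this norm. The boundedness just obtained, applied to the symbol $b-b_m$, gives
$$
\|[b,D^\beta]-[b_m,D^\beta]\|_{L^r\to L^p}=\|[b-b_m,D^\beta]\|_{L^r\to L^p}\to 0,
$$
so by the norm-closedness of the compact operators in $\mathcal{L}(L^r,L^p)$, it suffices to show that $[b_m,D^\beta]$ is compact for each smooth, compactly supported $b_m$. For these I would apply the Fr\'echet--Kolmogorov criterion to $\{[b_m,D^\beta]f_k\}$ with $\{f_k\}$ bounded in $L^r$: uniform $L^p$-smallness at infinity follows from the pointwise estimate $|[b_m,D^\beta]f_k(x)|\lesssim |x|^{-(n+\beta)}\|f_k\|_r$, valid for $|x|$ beyond the support of $b_m$ (H\"older against $b_m\in L^{r'}$); while uniform $L^p$-equicontinuity under translations by $h$ follows from the Lipschitz-type bound $|b_m(x)-b_m(y)|\lesssim \min(|x-y|,1)$ combined with a dyadic splitting of the integration region around $|x-y|\approx|h|$.

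The main obstacle I expect is the Kato--Ponce commutator estimate at the borderline exponent $n/\beta$. At this endpoint the Sobolev embedding $W^{\beta,n/\beta}\hookrightarrow\mathrm{BMO}$ just fails to reach $L^\infty$, so the classical proof of the fractional Leibniz rule (which places one factor in $L^\infty$) requires adaptation via a Bony paraproduct decomposition, with the high--high resonant paraproduct handled by $\mathrm{BMO}$-type singular-integral bounds. One must also verify that the resulting operator norm of $[b,D^\beta]$ depends only on $\|b\|_{W^{\beta,n/\beta}}$, which is precisely what makes the norm-convergence step in the compactness argument go through.
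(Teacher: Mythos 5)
Your proposal follows the paper's argument in all essentials: boundedness via a fractional Leibniz estimate (the paper cites the Kenig--Ponce--Vega inequality $\|D^\beta(fg)-fD^\beta g\|_p\lesssim\|D^\beta f\|_{n/\beta}\|g\|_{np/(n-\beta p)}$, which is exactly your three-term commutator bound plus the H\"older piece), reduction to smooth compactly supported $b$ via $\dot W^{\beta,n/\beta}$-approximation and norm-closedness of compacts, and Fr\'echet--Kolmogorov applied to the smooth-symbol commutator using kernel decay off $\supp b$ together with a near-/far-diagonal split for translation equicontinuity. The only slight misalignment is the obstacle you anticipate at the exponent $n/\beta$: it does not actually arise, since the Kenig--Ponce--Vega inequality is already stated with the Lebesgue exponents $(n/\beta,\,np/(n-\beta p))$ and requires no $L^\infty$ factor or paraproduct refinement.
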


\noindent
The boundedness of the commutator can be seen as a consequence of fractional versions of the Leibnitz rule. For the compactness, the Fr\'echet-Kolmogorov characterization of compact subsets of $L^p$ is combined with the cancellation properties of the kernel of the commutator.  Also, in the proof of Theorem \ref{Teo1} one uses Theorem \ref{cpct} with $\beta=1-\alpha$. This explains the restriction $\alpha>\frac12$ in Theorem \ref{Teo1}, as what one really uses is that $\mu\in W^{1-\alpha,\frac2{1-\alpha}}(\C)$. Note that this space contains $W^{\alpha, \frac2\alpha}(\C)$ if and only if $\alpha>\frac12$.\\
\\
A detailed proof of Theorem \ref{cpct} is provided at Section \ref{compactness}. In Section \ref{beltr}, we find a priori estimates for generalized Beltrami equations with coefficients in $W^{\theta,\frac2\theta}$, and  prove Theorem \ref{Teo1} and Theorem \ref{apriori}. \\
\\
\textbf{Acknowledgements}. The three authors are partially supported by the projects 2014SGR75 (Generalitat de Catalunya), MTM2013-44699-P (Ministerio de Econom\'\i a y Competitividad) and Marie Curie Initial Training Network MAnET (FP7-607647).
A. Clop is also supported by the Programa Ram\'on y Cajal.

\section{Proof of Theorem \ref{cpct}}\label{compactness}

The proof of Theorem \ref{cpct} we present here is based on classical ideas, see for instance \cite{KLi}. We will need the following auxilliary result about the Leibnitz rule for fractional derivatives.

\begin{prop}\label{KVP}\emph{(Kenig-Ponce-Vega's Inequality \cite{KPV})}  \newline
Let $\beta\in\left(0,1\right)$ and $1<p<\frac{n}{\beta}$. Then the inequality
$$
\|D^\beta(f\,g)-f\,D^\beta g\|_p\leq C\,\|D^\beta f\|_\frac{n}{\beta}\,\|g\|_\frac{np}{n-\beta p}.
$$
holds whenever $f, g\in \mathcal{C}^\infty_c(\R^n)$. 
\end{prop}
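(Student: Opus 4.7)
The plan is to start from the principal value representation of $D^\beta$ provided by \eqref{derifrac}, which is legitimate because $f,g\in\mathcal{C}^\infty_c$. The arithmetic identity
\begin{equation*}
f(x)g(x)-f(y)g(y)=g(y)(f(x)-f(y))+f(x)(g(x)-g(y))
\end{equation*}
gives immediately
\begin{equation*}
D^\beta(fg)(x)-f(x)D^\beta g(x)=C_{n,\beta}\int_{\R^n}\frac{(f(x)-f(y))\,g(y)}{|x-y|^{n+\beta}}\,dy,
\end{equation*}
and then writing $g(y)=g(x)-(g(x)-g(y))$ yields the symmetric decomposition
\begin{equation*}
D^\beta(fg)-f\,D^\beta g\;=\;g\cdot D^\beta f\;+\;R(f,g),\qquad R(f,g)(x):=-C_{n,\beta}\!\int\frac{(f(x)-f(y))(g(x)-g(y))}{|x-y|^{n+\beta}}\,dy.
\end{equation*}

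The first piece $g\cdot D^\beta f$ is disposed of by a single application of H\"older's inequality: the exponents $\frac{np}{n-\beta p}$ and $\frac{n}{\beta}$ are H\"older-conjugate to $p$ because $\frac{1}{p}=\frac{n-\beta p}{np}+\frac{\beta}{n}$ (which uses precisely the hypothesis $1<p<n/\beta$), and this already produces the right-hand side of the claimed inequality. So everything is reduced to proving $\|R(f,g)\|_p\lesssim \|D^\beta f\|_{n/\beta}\,\|g\|_{np/(n-\beta p)}$.

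For the remainder I would use the factorization $|x-y|^{n+\beta}=|x-y|^{n/2+\beta_1}\cdot|x-y|^{n/2+\beta_2}$ with any choice of parameters $\beta_1+\beta_2=\beta$, $\beta_j>0$, and Cauchy--Schwarz in $y$, getting the pointwise bound
\begin{equation*}
|R(f,g)(x)|\;\leq\;C\,S_{\beta_1}f(x)\,S_{\beta_2}g(x),\qquad S_\sigma h(x):=\left(\int\frac{|h(x)-h(y)|^2}{|x-y|^{n+2\sigma}}\,dy\right)^{1/2},
\end{equation*}
where $S_\sigma$ is Strichartz's square function, comparable in $L^r$-norm to $D^\sigma$ for every $1<r<\infty$. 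A further application of H\"older together with the fractional Sobolev embedding $\dot W^{\beta,n/\beta}\hookrightarrow \dot W^{\beta_1,n/\beta_1}$ and $\dot W^{0,np/(n-\beta p)}\hookrightarrow \dot W^{-\beta_2,\,r_2}$ (i.e. Riesz potential mapping on Lebesgue spaces) for suitably chosen $r_1,r_2$ with $1/r_1+1/r_2=1/p$ then closes the estimate.

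The delicate step is this last matching of exponents, because one is operating at the critical Sobolev index $\frac{n}{\beta}$ where $\dot W^{\beta,n/\beta}$ just fails to embed into $L^\infty$; the parameters $\beta_1,\beta_2$ must be chosen strictly inside $(0,\beta)$ so that both Sobolev embeddings are non-endpoint. If the resulting range of $p$ does not cover the full interval $(1,n/\beta)$ at once, the cleanest fallback is the original \cite{KPV} route via Littlewood--Paley/paraproduct decomposition: writing $fg=\pi_f g+\pi_g f+\Pi(f,g)$ one sees that the main paraproduct $\pi_g(D^\beta f)$ and its variants are handled by the Coifman--Meyer bilinear multiplier theorem, which automatically gives the claimed bound for every $1<p<n/\beta$.
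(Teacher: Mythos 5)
First, a point of comparison: the paper does not prove this proposition at all --- it is stated as a quoted result of Kenig--Ponce--Vega \cite{KPV} and used as a black box --- so your attempt has to stand as a self-contained proof rather than be matched against an argument in the text.

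Your reduction is correct as far as it goes: for $f,g\in\mathcal{C}^\infty_c(\R^n)$ the identity $D^\beta(fg)-f\,D^\beta g=g\,D^\beta f+R(f,g)$ with $R(f,g)(x)=-C_{n,\beta}\int (f(x)-f(y))(g(x)-g(y))\,|x-y|^{-n-\beta}\,dy$ is right, and H\"older with $\frac1p=\frac{n-\beta p}{np}+\frac{\beta}{n}$ disposes of $g\,D^\beta f$. The gap is in your treatment of $R$. Cauchy--Schwarz with the split $\beta=\beta_1+\beta_2$, $\beta_j>0$, gives $|R(x)|\lesssim S_{\beta_1}f(x)\,S_{\beta_2}g(x)$, and after H\"older you would have to bound $\|S_{\beta_2}g\|_{r_2}$ by the pure Lebesgue norm $\|g\|_{np/(n-\beta p)}$. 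That is impossible: $S_{\beta_2}g$ measures $+\beta_2$ derivatives of $g$ (in the range where Strichartz's equivalence holds, $\|S_{\beta_2}g\|_{r_2}\approx\|D^{\beta_2}g\|_{r_2}$), and no Lebesgue norm controls it --- take $g_N(x)=e^{iNx_1}\chi(x)$ with $\chi$ a fixed bump: $\|g_N\|_{np/(n-\beta p)}$ stays bounded while $\|S_{\beta_2}g_N\|_{r_2}\sim N^{\beta_2}\to\infty$, even though $\|R(f,g_N)\|_p$ must stay bounded by the proposition itself. The embedding you invoke, $L^{np/(n-\beta p)}\hookrightarrow \dot W^{-\beta_2,r_2}$, goes to \emph{negative} smoothness and never reconnects with the positive-smoothness object $S_{\beta_2}g$; Sobolev embeddings cannot gain derivatives, so the announced ``matching of exponents'' cannot be carried out for any choice of $\beta_1,\beta_2>0$. (A secondary inaccuracy: the comparability of $S_\sigma$ with $D^\sigma$ does not hold for all $1<r<\infty$; the direction $\|S_\sigma h\|_r\lesssim\|D^\sigma h\|_r$ requires $r>\frac{2n}{n+2\sigma}$.) The genuinely hard content of the proposition is exactly the estimate $\|R(f,g)\|_p\lesssim\|D^\beta f\|_{n/\beta}\,\|g\|_{np/(n-\beta p)}$ with \emph{no} derivatives falling on $g$, and a pointwise Cauchy--Schwarz bound cannot exploit the cancellation needed to shift all the smoothness onto $f$; that step is done in the literature via Littlewood--Paley/paraproduct decompositions and Coifman--Meyer bounds. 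Your fallback names that route but only as a pointer to \cite{KPV} --- which is precisely the paper's own move of citing the result --- so as a proof the write-up is incomplete at its central step.
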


\noindent
With this result at hand, we immediately get that the commutator
$$[b,D^\beta]:L^\frac{np}{n-\beta p}(\R^n)\to L^p(\R^n)$$
admits a unique bounded extension. Remarkably,
$$
\|[b,D^\beta]\|_{L^\frac{np}{n-\beta p}(\R^n)\to L^p(\R^n)}\leq C\,\|b\|_{\dot{W}^{\beta,\frac{n}{\beta}}(\R^n)}.$$
As a consequence, if $b_n\in\mathcal{C}^\infty_c(\R^n)$ is such that
$$\lim_{n\to\infty}\|b_n-b\|_{\dot{W}^{\beta,\frac{n}{\beta}}(\R^n)}=0$$
then
$$\lim_{n\to\infty}\|[b_n,D^\beta]-[b,D^\beta]\|_{L^\frac{np}{n-\beta p}(\R^n)\to L^p(\R^n)}=0$$
Thus, we are reduced to prove Theorem \ref{cpct} with the extra assumption $b\in\mathcal{C}^\infty_c(\R^n)$.  To this end, we observe that the commutator $C_b=[b,D^\beta]$ can be represented as an integral operator
$$
\aligned
C_bf(x)
&=b(x)\,P.V.\int K(x,y)\,(f(x)-f(y))\,dy-P.V.\int K(x,y)\,(f(x)\,b(x)-b(y)\,f(y))\,dy\\
&=P.V. \int K(x,y)\,(b(y)-b(x))\,f(y)\,dy\\
&=\int \cK(x,y)\,f(y)\,dy
\endaligned$$
where 
$$\cK(x,y)= C_{n,\beta}\,\frac{(b(y)-b(x))}{|y-x|^{n+\beta}}$$
and the principal value has been removed from the last integral because the smoothness of $b$ ensures that $x\mapsto \cK(x,y)$ is integrable. For $C_b$ to be compact, we need to prove that the image under $C_b$  of the unit ball of $L^\frac{np}{n-\beta p}(\R^n)$ is compact in $L^p(\R^n)$. To this end, we denote
$$\F=\{C_bf: \; \|f\|_{L^\frac{np}{n-\beta p}(\R^n)}\leq 1\}.$$
The classical Fr\'echet-Kolmogorov's Theorem asserts that $\F$ is relatively compact if and only if the following conditions hold:
\begin{itemize}
\item[$(i)$] $\F$ is uniformly bounded, i.e. $\sup_{\psi\in\F}\|\psi\|_{L^p(\R^n)}<\infty$.
\item[$(ii)$] $\F$ vanishes uniformly at $\infty$, i.e. $\sup_{\psi\in\F}\|\psi\,\chi_{|x|>R}\|_{L^p(\R^n)}\to 0$ as $R\to\infty$.
\item[$(iii)$] $\F$ is uniformly equicontinuous, i.e. $\sup_{\psi\in\F}\|\psi(\cdot+h)-\psi(\cdot)\|_{L^p(\R^n)}\to 0$ as $|h|\to 0$. 
\end{itemize}
In our particular case, every element $\psi\in\F$ has the form $\psi=C_bf$ with $\|f\|_{L^\frac{np}{n-\beta p}(\R^n)}\leq 1$. 
Thus $(i)$ follows automatically from the boundedness of $[b, D^\beta]:L^\frac{np}{n-\beta p}(\R^n)\to L^p(\R^n)$.\\ 
\\
\noindent
To prove $(ii)$, let $R_0>0$ be such that $\supp(b)\subset B(0,R_0)$. At points $x$ with $|x|> 3R_0$ we have
\begin{equation}\label{decay}
| C_bf(x)| \le \int \frac{|f(y)\, b(y)|}{|x-y|^{n+\beta}}\, dy \le C\frac{\| b\|_\infty}{|x|^{n+\beta}}\int_{B(0,R_0)}
|f(y)|\, dy \le C\frac{\| b\|_\infty}{|x|^{n+\beta}} \|f\|_{q}R_0^{n\frac{q-1}{q}}.
\end{equation}
Thus, if $R> 3R_0$ then
$$
\aligned
\int_{|x|> R}|C_b f(x)|^p\,dx \le C_R \| b\|_{\infty}^{p} \| f \|^{p}_{\frac{np}{n-\beta p}}
\int_{|x|> R} |x|^{-p(n+\beta)}\, dx \to 0 \qquad \text{as } R \to \infty
\endaligned$$
as needed. \\
\\
For the proof of $(iii)$, we could proceed as usually, which means to regularize the kernel $\cK$ in the diagonal $\{ x=y\}$. Then we would prove the compactness of this regularization and finally the limit of compact operators would give us the result. However, a more direct approach is available, since $ \|\cK (x,\cdot)  \|_{L^1(\R^n)}$ is uniformly bounded. 

\begin{lem}
One has
\begin{equation}\label{equicont}
\lim_{h\to 0}\sup_{f\neq 0}\frac{\|C_bf(\cdot+h)-C_bf(\cdot)\|_{L^q(\R^n)}}{\|f\|_{L^q(\R^n)}}=0
\end{equation}
whenever $1\leq q\leq \infty$.
\end{lem}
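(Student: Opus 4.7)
The plan is to regard $C_bf(\cdot+h)-C_bf(\cdot)$ as the integral operator with kernel
$$K_h(x,y):=\cK(x+h,y)-\cK(x,y),$$
and to bound its $L^q$ operator norm for every $1\leq q\leq\infty$ by interpolation between the trivial $L^\infty\to L^\infty$ and $L^1\to L^1$ bounds. This yields
$$\|C_bf(\cdot+h)-C_bf(\cdot)\|_{L^q}\leq\max\{M_1(h),\,M_2(h)\}\,\|f\|_{L^q},$$
with $M_1(h):=\sup_x\int|K_h(x,y)|\,dy$ and $M_2(h):=\sup_y\int|K_h(x,y)|\,dx$. Since Theorem \ref{cpct} has already been reduced to $b\in\cC^\infty_c(\R^n)$, the entire question collapses to proving that both $M_1(h)$ and $M_2(h)$ tend to zero as $|h|\to 0$.

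To estimate $M_1(h)$, I would fix $x\in\R^n$ and split the integral into the near-diagonal region $|y-x|\leq 2|h|$ and its complement. On the former, both $|\cK(x,y)|$ and $|\cK(x+h,y)|$ are dominated, via the Lipschitz bound $|b(u)-b(v)|\leq\|\nabla b\|_\infty|u-v|$, by $|y-x|^{-(n+\beta-1)}$-type singularities which are locally integrable and produce a contribution of order $|h|^{1-\beta}$. On the far-diagonal region the denominators of $K_h$ are comparable, and the decomposition
$$K_h(x,y)=C_{n,\beta}\left[\frac{b(x)-b(x+h)}{|y-x-h|^{n+\beta}}+(b(y)-b(x))\left(\frac{1}{|y-x-h|^{n+\beta}}-\frac{1}{|y-x|^{n+\beta}}\right)\right]$$
is effective: the first summand is controlled by the Lipschitz bound on $b$, whereas the second calls for a mean value argument in $t\in[0,1]$ applied to $t\mapsto|y-x-th|^{-(n+\beta)}$, producing the additional gain $|h|/|y-x|^{n+\beta+1}$. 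Combining with $|b(y)-b(x)|\leq\min(2\|b\|_\infty,\|\nabla b\|_\infty|y-x|)$ again yields $O(|h|^{1-\beta})$. All the implicit constants depend only on $\|b\|_\infty$ and $\|\nabla b\|_\infty$, hence the estimate is uniform in $x$.

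The analogous bound for $M_2(h)$ follows from the antisymmetry $\cK(x,y)=-\cK(y,x)$, which gives $|K_h(x,y)|=|\cK(y,x+h)-\cK(y,x)|$; after swapping the roles of the two variables, the computation reduces to exactly the one already carried out for $M_1(h)$. I expect the main technical obstacle to be precisely the treatment of the near-diagonal region, where the individual kernels $|y-x|^{-(n+\beta)}$ and $|y-x-h|^{-(n+\beta)}$ fail to be integrable; it is only the extra factor $|b(y)-b(x)|$ in $\cK$, together with the Lipschitz regularity inherited from the reduction to $\cC^\infty_c$, that restores integrability and yields the quantitative smallness of order $|h|^{1-\beta}$.
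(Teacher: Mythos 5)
Your proposal takes essentially the same route as the paper: both treat $C_bf(\cdot+h)-C_bf(\cdot)$ as an integral operator with kernel $K_h(x,y)=\cK(x+h,y)-\cK(x,y)$, control its $L^q$-operator norm by a Schur-type test on the mixed $L^1$-norms of $K_h$, and then make those $L^1$-norms small by splitting into a near-diagonal region (controlled by the Lipschitz bound on $b$, which makes $|x-y|^{1-n-\beta}$ locally integrable) and a far-diagonal region (controlled by the mean-value gain $|h|/|x-y|^{n+\beta+1}$). The paper uses a two-step "first shrink $\rho$, then shrink $|h|$" argument, yielding only qualitative smallness of $B(h)=\sup_y\|K_h(\cdot,y)\|_{L^1}$, whereas your choice $\rho\sim|h|$ gives the explicit rate $O(|h|^{1-\beta})$; this is a genuine (if unnecessary) sharpening. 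One small imprecision: the antisymmetry $\cK(x,y)=-\cK(y,x)$ gives $|K_h(x,y)|=|\cK(y,x+h)-\cK(y,x)|$, so $M_2(h)=\sup_y\int|\cK(y,x+h)-\cK(y,x)|\,dx$ shifts the \emph{second} argument of $\cK$, while $M_1(h)$ shifts the \emph{first}; these are analogous but not literally the same computation, so "reduces to exactly the one already carried out" overstates things slightly. Writing out the $M_2$ estimate directly (as the paper in fact does, since its $B(h)$ is precisely $M_2$) is straightforward and closes that cosmetic gap.
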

\begin{proof}
We start by observing that
$$
\aligned
\|\cK (x,\cdot)  \|_{L^1(\R^n)}
&=\int _{|x-y|\le 1} |\cK (x,y)| \,dy + \int _{|x-y|> 1} |\cK (x,y)| \,dy\\
&\le C \|\nabla b\|_\infty  \int _{|x-y|\le 1} |x-y|^{-n-\beta+1} \,dy +
   C \| b\|_\infty  \int _{|x-y|> 1} |x-y|^{-n-\beta} \,dy\\
&\le C\left\{ \frac{\|\nabla b\|_\infty}{1-\beta} + \frac{\| b\|_\infty}{\beta}     \right\} := A
\endaligned$$
As a consequence, the behavior of $C_{b}f$ is like the convolution of the function $f$ with a $L^{1}$-kernel. In particular, by Jensen's inequality one gets
 \begin{equation}\label{convo}
 \| C_bf\|_{q}\le A \| f\|_{q}, \qquad 1\le q\le \infty ,
 \end{equation}
so that $C_b:L^q(\R^n)\to L^q(\R^n)$, $1\leq q\leq \infty$. \\ 
\\
Towards \eqref{equicont}, we need to estimate the translates of $C_b$. Clearly,
$$
\aligned
\| C_{b}f(\cdot +h) &- C_{b}f(\cdot)  \|_{q}^q
=\int \left| \int f(y) (\cK (x+h,y) - \cK(x,y)) \,dy \right|^{q} \,dx\\
&\leq\int\left(\int  |f(y)|^q\,|\cK (x+h,y) - \cK(x,y)| \,dy  \right)\,\left(\int  |\cK (x+h,y) - \cK(x,y)| \,dy \right)^\frac{q}{q'}dx\\
&\leq (2A)^{q-1}\,\int \left(\int |\cK (x+h,y) - \cK(x,y)| \,dx\right)\, |f(y)|^q\,dy\\
&= (2A)^{q-1}\,B(h)\,\int|f(y)|^qdy\endaligned$$
where $B(h)=\sup_y \|\cK(\cdot+h,y)-\cK(\cdot, y)\|_{L^1(\R^n)}$. In order to find estimates for $B(h)$, we choose an arbitrary $\rho>0$ and write $$
\int | \cK (x+h,y) - \cK(x,y) |\, dx = \int_{|x-y|\le \rho} \cdots+ \int_{|x-y|> \rho} \cdots := I + II.
$$
The integrability of $\cK$ gives that $I$ is small if $\rho$ is small enough. Indeed,
$$
\int_{|x-y|\le \rho} |\cK(x,y) |\, dx \le \|\nabla b\|_\infty  \int _{|x-y|\le \rho} |x-y|^{-n-\beta+1} \,dx =
C \frac{\|\nabla b\|_\infty}{1-\beta} \rho^{1-\beta} .
$$
Moreover, if $x\in B(y,\rho)$ then $x+h\in B(y, \rho+|h|)$ so that
$$
\int_{|x-y|\le \rho} |\cK(x+h,y) |\, dx \le \int_{|x-(y-h)|\le 2\rho} |\cK(x+h,y) |\, dx\le
C \frac{\|\nabla b\|_\infty}{1-\beta} (\rho+|h|)^{1-\beta} .
$$
Therefore, there exists $\rho_0>0$ such that if $\rho<\rho_0$ and $|h|<\rho_0/2$ then $I\le \varepsilon/((2 A)^{q-1})$. Let us then fix $\rho=\rho_0/2$, and take care of $II$. Note that, since $|h|< \rho_0/2$ and $|x-y|> \rho$, we have
$$
\aligned
|\cK (x,+hy) - \cK(x,y)| & = \left| (b(y)-b(x+h))\left( \frac{1}{|x+h-y|^{n+\beta}} -  \frac{1}{|x-y|^{n+\beta}} \right) \right.\\
& \left. \qquad + \frac{1}{|x-y|^{n+\beta}} (b(x)-b(x+h)) \right|\\
& \le 2\| b\|_{\infty} \frac{C|h|}{|x-y|^{n+\beta +1}}
+ \| \nabla b\|_{\infty} \frac{|h|}{|x-y|^{n+\beta}}
\endaligned$$
Then, since we fixed $\rho=\rho_0/2$, 
$$
\aligned
II &\le C\| b\|_{\infty} |h| \int_{|x-y|>\rho} \frac{dx}{|x-y|^{n+\beta +1}}
+ C\| \nabla b\|_{\infty} |h| \int_{|x-y|>\rho} \frac{dx}{|x-y|^{n+\beta}}\\[3mm]
& \le C \frac{|h|}{\beta}\left( \frac{\| b\|_{\infty}}{\rho_0^{1+\beta}} + \frac{\|\nabla b\|_{\infty}}{\rho_0^{\beta}}  \right).
\endaligned
$$
Thus, by taking $|h|$ sufficiently small, we see that $II\le \varepsilon/((2 A)^{q-1})$. Hence $B(h)\to 0$ as $|h|\to 0$, and thus \eqref{equicont} follows.
\end{proof}

\noindent
With the above Lemma, the proof of $(iii)$ is almost immediate. Indeed, by \eqref{decay} we see that 
$$\aligned
\| C_{b}f(\cdot +h)- C_{b}f(\cdot)  \|_{p}^p & = \int_{|x|\le R} | C_{b}f(x +h)- C_{b}f(x) |^p\, dx\\
& \qquad + \int_{|x|> R} | C_{b}f(x +h)- C_{b}f(x) |^p\, dx\\
& \le \| C_{b}f(\cdot +h)- C_{b}f(\cdot)  \|_{\frac{np}{n-\beta p}}^p R^{\beta p}\\
& \qquad +  C_R \| b\|_{\infty}^{p} \| f \|^{p}_{\frac{np}{n-\beta p}}\int_{|x|> R} |x|^{-p(n+\beta)}\, dx.
\endaligned$$
at least for $R>3R_0$. In particular, the last term is small if $R$ is large enough. But for this particular $R$, and using \eqref{equicont}, the penultimate term is also small if $|h|$ is small. Therefore $(iii)$ follows. Theorem \ref{cpct} is proved.

\section{Beltrami operators in fractional Sobolev spaces}\label{beltr}

\noindent
The regularity theory for Beltrami equations relies on the behavior of the Beurling operator, which is formally defined as a principal value operator,
$$
\B f(z)= - \frac{1}{\pi}\,   \text{p.v.}  \int_{\mathbb C} f(z-w)\frac{1}{w^2}\, dA(w).
$$
This operator intertwines the $\d$ and $\dbar$ derivatives. More precisely, its Fourier representation
$$
\widehat{\B f}(\xi) = \frac{\overline{\xi}}{\xi}\,\, \hat{f}(\xi).
$$
makes it clear that $\B(\dbar f)=\d f$, at least when $f$ is smooth and compactly supported. Furthermore, $\B$ is an isometry on $L^2(\C)$,  and as a Calder\'on-Zygmund operator, it can be boundedly extended to $ L^{p}(\C)$ whenever $1<p<\infty$. \\
\\
Before proving Theorem \ref{Teo1}, we first state and prove the following fact about generalized Beltrami equations. Let us recall that $\overline{\B}$ denotes the composition of $\B$ with the complex conjugation operator, that is, $\overline{\B}(f) = \overline{\B (f)}$.

\begin{prop}\label{invertib}
Let $\alpha\in (0,1)$. Let $\mu, \nu \in W^{\alpha,\frac2\alpha}(\C)$ be  compactly supported Beltrami coefficients, with $\| |\mu| + |\nu| \|_{\infty}\leq k<1$. Then the generalized Beltrami operators
$$
 \Id-\mu\,\B  -\nu \overline{\B}:\dot{W}^{\alpha, p}(\C)\to \dot{W}^{\alpha, p}(\C)
$$
are bounded and boundedly invertible if $1<p<\frac2\alpha$. 
\end{prop}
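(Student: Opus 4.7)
The plan is to establish boundedness via the fractional Leibniz rule, then to reduce the invertibility question to an $L^p$-level problem by conjugating with $D^\alpha$; this produces commutators of the form $[D^\alpha,\mu]$ and $[D^\alpha,\nu]$ to which Theorem \ref{cpct} applies.

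\textbf{Boundedness.} Given $f\in\dot{W}^{\alpha,p}(\C)$, I split
$$
D^\alpha(\mu\,\B f)\;=\;\mu\,D^\alpha(\B f)\;+\;\bigl(D^\alpha(\mu\,\B f)-\mu\,D^\alpha(\B f)\bigr).
$$
Because $\B$ is a Fourier multiplier it commutes with $D^\alpha$, so the first summand has $L^p$-norm at most $\|\mu\|_\infty\,\|\B\|_{L^p\to L^p}\,\|f\|_{\dot{W}^{\alpha,p}}$. Proposition \ref{KVP} with $n=2,\beta=\alpha$ bounds the second by $C\,\|D^\alpha\mu\|_{2/\alpha}\,\|\B f\|_{2p/(2-\alpha p)}$, which is then controlled through the Sobolev embedding $\dot{W}^{\alpha,p}(\C)\hookrightarrow L^{2p/(2-\alpha p)}(\C)$ (valid precisely because $1<p<2/\alpha$) and the $L^{2p/(2-\alpha p)}$-boundedness of $\B$. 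The identical argument, using that complex conjugation commutes with $D^\alpha$, applies to $\nu\,\overline{\B}$.

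\textbf{Reduction and compactness.} The Riesz potential $I_\alpha$ is an isomorphism $L^p\to\dot{W}^{\alpha,p}$ inverse to $D^\alpha$, so $T$ is invertible on $\dot{W}^{\alpha,p}$ if and only if $\widetilde T:=D^\alpha\circ T\circ I_\alpha$ is invertible on $L^p$. A direct computation, again using that $\B$ and $\overline{\B}$ commute with $D^\alpha$, yields the identity
$$
\widetilde T\,h\;=\;T\,h\;-\;[D^\alpha,\mu]\bigl(\B\,I_\alpha h\bigr)\;-\;[D^\alpha,\nu]\bigl(\overline{\B}\,I_\alpha h\bigr)\;=:\;T\,h-S\,h.
$$
By Hardy--Littlewood--Sobolev, $I_\alpha:L^p\to L^{2p/(2-\alpha p)}$ is bounded; $\B$ and $\overline{\B}$ are bounded on $L^{2p/(2-\alpha p)}$; and Theorem \ref{cpct} (with $n=2$, $\beta=\alpha$ and the indicated exponents) provides the compactness of $[D^\alpha,\mu]$ and $[D^\alpha,\nu]$ from $L^{2p/(2-\alpha p)}$ into $L^p$. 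Hence $S$ is compact on $L^p$.

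\textbf{Invertibility of $\widetilde T$.} Since $\|\B\|_{L^2\to L^2}=1$ and $\||\mu|+|\nu|\|_\infty\leq k<1$, the Neumann series shows that $T$ is invertible on $L^2$, and hence on $L^p$ for every $p$ in the Astala--Iwaniec--Saksman neighbourhood of $2$; there, $\widetilde T=T-S$ is a compact perturbation of an invertible operator and is therefore Fredholm of index zero. Injectivity upgrades this to invertibility: if $\widetilde T h=0$, then unwinding the conjugation gives $Tf=0$ for $f:=I_\alpha h\in\dot{W}^{\alpha,p}$, and writing $f=\dbar F$ for the Cauchy transform $F$ of $f$ turns this into the homogeneous generalized Beltrami equation $\dbar F=\mu\,\d F+\nu\,\overline{\d F}$ with $F\to 0$ at infinity; the uniqueness part of the measurable Riemann mapping theorem forces $F\equiv 0$, whence $f\equiv 0$. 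The main obstacle is to extend this argument from the $L^p$-invertibility range of $T$ to the entire interval $(1,2/\alpha)$: outside the Astala--Iwaniec--Saksman range the bare $L^p$-invertibility of $T$ is unavailable, and one must exploit the extra Sobolev regularity of $\mu,\nu$ (encoded in the compactness of $S$) together with a continuity/bootstrap argument in $p$ to close the proof.
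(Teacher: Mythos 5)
There is a genuine gap, and it is exactly where you flag your own uncertainty. You treat the $L^p$-invertibility of $T=\Id-\mu\,\B-\nu\,\overline{\B}$ as available only for $p$ in a Neumann-series/Astala-Iwaniec-Saksman neighbourhood of $2$, and then leave the extension to the full interval $(1,2/\alpha)$ as an unresolved ``main obstacle,'' hoping for a bootstrap through the compactness of $S$. No such bootstrap is needed: the hypothesis $\mu,\nu\in W^{\alpha,\frac2\alpha}(\C)$ gives $\mu,\nu\in VMO$, and Iwaniec's theorem (reference \cite{I} in the paper, extended to the $\nu\neq 0$ case in \cite{Ko} and \cite{CC}) states that $\Id-\mu\,\B-\nu\,\overline{\B}$ is boundedly invertible on $L^p(\C)$ for \emph{every} $1<p<\infty$, not just near $p=2$. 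This is precisely the ingredient the paper invokes. With it, $\widetilde T=T-S$ on $L^p$ is a compact perturbation of an invertible operator for every $p\in(1,\frac2\alpha)$, hence Fredholm of index zero, and the proof closes without any continuity argument in $p$.

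Apart from that gap, your route is essentially the paper's. The conjugation by $D^\alpha$ and $I_\alpha$ to produce $[D^\alpha,\mu]\,\B\,I_\alpha$ and $[D^\alpha,\nu]\,\overline{\B}\,I_\alpha$, and the appeal to Theorem \ref{cpct} for compactness, are identical. Your boundedness argument via the Kenig--Ponce--Vega inequality is a legitimate alternative to the paper's pointwise-multiplier argument (citing \cite{RuSic}), and buys you nothing extra here but is no worse. For injectivity, you unwind to a homogeneous Beltrami system for a Cauchy potential $F$ and invoke uniqueness from the measurable Riemann mapping theorem; this works, though you should be a bit more careful in the case $\nu\neq0$ (where the relevant uniqueness is the one in \cite{Ko}, \cite{CC}, not the $\C$-linear statement). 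The paper's injectivity step is more direct: the kernel of $T$ in $\dot{W}^{\alpha,p}$ embeds by Sobolev into $L^{2p/(2-\alpha p)}$, where $T$ is already known to be injective by the same references.
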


\begin{proof} 
The operators $\Id-\mu\,\B-\nu\,\overline{\B}$ are clearly bounded in $\dot{W}^{\alpha,p}(\C)$, since $\B$ preserves $\dot{W}^{\alpha,p}(\C)$  (recall that we are assuming $1<p<\frac2\alpha$) and also because if $\mu\in L^\infty(\C)\cap W^{\alpha,\frac2\alpha}(\C)$ then $\mu$  is a pointwise multiplier of $\dot{W}^{\alpha,p}(\C)$ (similarly for $\nu$). This fact follows directly working on the expression \eqref{derifrac} for $D^\alpha$ or see \cite[p. 250]{RuSic}. Also, the operator $\Id-\mu\,\B-\nu\,\overline{\B}$ is clearly injective in $\dot{W}^{\alpha,p}(\C)$, as its kernel is a subset of $L^\frac{2p}{2-\alpha p}(\C)$ were 
we already know it is injective (see \cite{I} for a proof in the $\C$-linear setting, and \cite{Ko} or also \cite{CC} for a proof in the general case). Thus, in order to get the surjectivity (and finish the proof by the Open Mapping Theorem) we will prove that $\Id-\mu\,\B-\nu\,\overline{\B}$ is a Fredholm operator on $\dot{W}^{\alpha,p}(\C)$ with index $0$. To do this, it is sufficient if we prove that
$$D^\alpha(\Id-\mu\,\B-\nu\,\overline{\B})I_\alpha:L^p(\C)\to L^p(\C)$$
is a Fredholm operator of index $0$, since both properties stay invariant under the topological isomorphisms
$$
\aligned
D^\alpha&: \dot{W}^{\alpha,p}(\C)\to L^p(\C),\\
I_\alpha&:L^p(\C)\to \dot{W}^{\alpha,p}(\C).
\endaligned$$
But this follows easily. Indeed,
$$\aligned
D^\alpha(\Id-\mu\,\B-\nu\,\overline{\B})I_\alpha 
&=\Id-D^\alpha(\mu\,\B+\nu\,\overline{\B})I_\alpha \\
&=\Id-\mu\,\B-\nu\,\overline{\B} -[D^\alpha,\mu]\,\B\,I_\alpha -[D^\alpha,\nu]\,\overline{\B}\,I_\alpha\\
\endaligned$$
Above, $\Id-\mu\,\B-\nu\,\overline{\B}$ is invertible in $L^p(\C)$ by \cite{I}. Also, $[D^\alpha,\mu]\,\B\,I_\alpha$ is the composition of the bounded operators $I_\alpha:L^p(\C)\to L^\frac{2p}{2-\alpha p}(\C)$ and $\B:L^\frac{2p}{2-\alpha p}(\C)\to L^\frac{2p}{2-\alpha p}(\C)$ with the operator $[D^\alpha,\mu]:L^\frac{2p}{2-\alpha p}(\C)\to L^p(\C)$, which is compact by Theorem \ref{cpct}. Hence $[D^\alpha,\mu]\,\B\,I_\alpha:L^p(\C)\to L^p(\C)$ is compact, and the same happens to $[D^\alpha,\nu]\,\overline{\B}\,I_\alpha$. Thus the term on the right hand side is the sum of an invertible operator with two compact operators. Hence it is a Fredholm operator. The claim follows. 
\end{proof}

\noindent
We are now ready to prove Theorem \ref{apriori}. 

\begin{proof}[Proof of Theorem \ref{apriori}]
By simplicity, we assume that $\nu=0$. Otherwise, the proof follows similarly. First of all, let us observe that if $g\in \dot{W}^{\alpha,p}(\C)$ and $\alpha p<2$ then automatically $g\in L^\frac{2p}{2-\alpha p}(\C)$ 
by the Sobolev embedding. On the other hand, and since $W^{\alpha,\frac2\alpha}(\C)\subset VMO$, we know from \cite{I} that a solution 
$f\in \dot{W}^{1,\frac{2p}{2-\alpha p}}(\C)$ exists, and moreover
$$\|Df\|_{L^\frac{2p}{2-\alpha p}(\C)}\leq C\,\|g\|_{L^\frac{2p}{2-\alpha p}(\C)}\leq C\,\|g\|_{\dot{W}^{\alpha,p}(\C)}.$$
Our goal consists of replacing the term on the left hand side by $\|Df\|_{\dot{W}^{\alpha,p}(\C)}$. \\
\\
To do this, we first note that $\d f = \B(\dbar f)$, since $f\in \dot{W}^{1,\frac{2p}{2-\alpha p}}$. Thus \eqref{beltramiequation} is equivalent to
$$
(\Id-\mu\,\B)(\dbar f) = g
$$
Now, from Proposition \ref{invertib} and our assumption $g\in \dot{W}^{\alpha,p}(\C)$, we also know that there is a unique $F\in \dot{W}^{\alpha,p}(\C)$  such that
\begin{equation}\label{first}
(\Id-\mu\,\B) F= g
\end{equation}
for which we know the estimate $\|F\|_{\dot{W}^{\alpha,p}(\C)}\leq C\,\|g\|_{\dot{W}^{\alpha,p}(\C)}$ holds. Of course, by the Sobolev embedding, $F\in L^\frac{2p}{2-\alpha p}(\C)$. From the invertibility of $\Id-\mu\,\B$ on $L^\frac{2p}{2-\alpha p}(\C)$, we immediately get that $F=\dbar f$ almost everywhere, and therefore $\dbar f\in \dot{W}^{\alpha,p}(\C)$. Proving that $\d f\in\dot{W}^{\alpha,p}(\C)$ is very easy, as we already knew that $f\in\dot{W}^{1,\frac{2p}{2-\alpha p}}(\C)$ and so we can be sure that $\d f = \B(\dbar f)$. Thus, $Df\in \dot{W}^{\alpha,p}(\C)$ and certainly
$$\|Df\|_{\dot{W}^{\alpha,p}(\C)}\leq C\, \|F\|_{\dot{W}^{\alpha,p}(\C)}\leq C\, \|g\|_{\dot{W}^{\alpha,p}(\C)}$$
as desired. 
\end{proof} 
 
\noindent
Towards the proof of Theorem \ref{Teo1}, we denote by $\mathsf C(h)$ the solid Cauchy transform,
\begin{equation}\label{cauchy}
 \mathsf C \, h(z) = \frac{1}{\pi}\,  \int_{\mathbb C} h(z-w)\frac{1}{w}\, dA(w).
\end{equation}
This operator appears naturally as a formal inverse to the $\dbar$ derivative, that is, the formula $\dbar \mathsf C(h) =h$ holds if $h\in L^p(\C)$ and $1<p<\infty$. Another important feature about the Cauchy transform is that $\partial \mathsf C= \B$. The Cauchy and Beurling transforms allow for a nice representation of the principal solution $\phi$ of the Beltrami equation $\dbar\phi=\mu\,\d\phi$,
$$
\phi(z) = z +  \mathsf C(h)(z),
$$
see for instance \cite[p. 165]{AIM}. In this representation, $h$ is a solution to the integral equation
$$
(\Id-\mu\,\B)(h) = \mu.
$$
As a consequence, the invertibility of the Beltrami operators $\Id-\mu\,\B$ also plays a central role in determining the smoothness of $\phi$. In particular, by applying Proposition \ref{invertib} with $\mu\in W^{\alpha,\frac2\alpha}$, we see that $Dh\in W^{\alpha, p}$ provided that $p<\frac2\alpha$, whence $D\phi\in W^{\alpha,p}_{loc}$. As a consequence, by Stoilow's Factorization Theorem (e.g., \cite[section 5.5]{AIM}), the same conclusion holds for any quasiregular solution $f$ of
$\dbar f -\mu\,\d f=0.$ However, this is not enough for Theorem \ref{Teo1}, which we prove now. 

\begin{proof}[Proof of Theorem \ref{Teo1}]
We will first prove that if $\mu\in W^{\alpha,\frac2\alpha}(\C)$ is a compactly supported Beltrami coefficient and $\alpha>\frac12$ (this is the point where
we use that restriction) the operator
$$T_\mu:=I_{1-\alpha}\left(\Id-\mu\,\B\right)D^{1-\alpha}: L^\frac{2}{\alpha}\left(\C\right) \longmapsto L^\frac{2}{\alpha}\left(\C\right)$$
is continuously invertible, with lower bounds depending only on $\|\mu\|_{L^\infty(\C)}$ and $\|\mu\|_{W^{\alpha,\frac2\alpha}(\C)}$. 
To do this, we proceed as usually,
$$\begin{aligned}
T_\mu=I_{1-\alpha}(\Id-\mu\,\B)D^{1-\alpha}
&=\Id-I_{1-\alpha}\mu \B D^{1-\alpha} \\
%&=\Id-\mu\,\B+\left(\mu-I_{1-\alpha}\mu D^{1-\alpha} \right)\B\\
%&= Id-\mu\B+\left(I_{1-\alpha}D^{1-\alpha}\mu-I_{1-\alpha}\mu D^{1-\alpha}\right)\B\\
%&=Id-\mu\B+I_{1-\alpha}\left( D^{1-\alpha}\mu-\mu D^{1-\alpha}\right)\B\\
&=\Id-\mu\,\B+I_{1-\alpha}\,[D^{1-\alpha},\mu ]\,\B.
\end{aligned}$$
Here, the term $\Id-\mu\,\B$ is bounded and continuously invertible in $L^\frac2\alpha(\C)$ by \cite{I}. Concerning the second term on the right hand side, from $\mu\in W^{\alpha,\frac2\alpha}(\C)\cap L^\infty(\C)$ and $\frac12<\alpha$ we easily get that $\mu\in W^{1-\alpha,\frac2{1-\alpha}}(\C)$. Thus we are legitimate to use Theorem \ref{cpct} with $\beta=1-\alpha$ and $p=\frac2\alpha$ and get that $[\mu, D^{1-\alpha}]$ is a compact operator from $L^\frac2\alpha(\C)$ into $L^2(\C)$.  As a consequence, we obtain that $T_\mu$ is a Fredholm operator from $L^\frac2\alpha(\C)$ into itself, which clearly has index $0$. So the desired lower bounds will be automatic if we see that it is injective.\\
\\
Let $F\in L^\frac{2}{\alpha}$ such that $T_\mu (F)=0$.  We want to show that $F=0$. First, if $F\in \dot{W}^{1-\alpha,2}(\C)$ then the result follows easily. Indeed, we can then write $F:=I_{1-\alpha}f$ for some $f\in L^2$ and write the equation in terms of $f$. We get  $I_{1-\alpha}(\Id-\mu\,\B)f=0$. From the classical $L^2$ theory, we have that $f=0$ and hence $F=0$. For a general $F\in L^\frac{2}{\alpha}$ satisfying $T_\mu(F)=0$ we will prove that necessarily $F\in \dot{W}^{1-\alpha,2}(\C)$, and therefore $F=0$. To do this, again we decompose $T_\mu$ in terms of the commutator,
$$(\Id-\mu\,\B)F= I_{1-\alpha}[\mu,D^{1-\alpha}]\B F.$$
Then by Theorem \ref{cpct} the term on the right hand side above belongs to $\dot{W}^{1-\alpha,2}(\C)$, because 
 $F\in L^\frac2\alpha(\C)$. Using again that $\alpha>\frac12$ one has $\mu\in W^{1-\alpha,\frac2{1-\alpha}}(\C)$, and therefore we can use Proposition \ref{invertib} to get that  $\Id-\mu\,\B: \dot{W}^{1-\alpha,2}(\C)\to\dot{W}^{1-\alpha,2}(\C)$ is continuously invertible. Hence 
$$F= (\Id-\mu\,\B)^{-1} I_{1-\alpha}[\mu,D^{1-\alpha}]\B F$$
belongs to $\dot{W}^{1-\alpha, 2}(\C)$.  The claim follows. 
\\
We now finish the proof. Given $\mu\in W^{\alpha,\frac2\alpha}(\C)$, we approximate it by $\mu_n\in\mathcal{C}^\infty_c(\C)$ in the $W^{\alpha,\frac2\alpha}(\C)$ topology, in such a way that $\|\mu_n\|_{L^\infty(\C)}\leq\|\mu\|_{L^\infty(\C)}$. Then every $\mu_n$ admits a principal quasiconformal map $\phi_n$, for which the function $g_n=\log \d\phi_n$ is well defined and solves
$$\dbar g_n-\mu_n\,\d g_n=\d\mu_n.$$
Therefore
$$(\Id-\mu_n\B)\dbar g_n=\d\mu_n.$$
We use the Fourier representation of the classical Riesz transforms in $\R^2$,
$$\widehat{\mathcal{R}_j u}\left(\xi\right)=-i\frac{\xi_j}{\left|\xi\right|}\widehat{u}\left(\xi\right) \qquad j=1,2$$
to represent 
$$\aligned
\dbar g &=-\pi D^{1-\alpha} (\cR_1+i\cR_2)(D^\alpha g)\\
\d g &=-\pi D^{1-\alpha}(\cR_1-i\cR_2)(D^\alpha g).
\endaligned$$
As a consequence, we obtain
$$(\Id-\mu_n\,\B) D^{1-\alpha}(\cR_1+i\cR_2) (D^\alpha g_n)=D^{1-\alpha}(\cR_1-i\cR_2)(D^\alpha \mu_n),$$
and therefore
$$T_{\mu_n}(\cR_1+i\cR_2) (D^\alpha g_n)=(\cR_1-i\cR_2)(D^\alpha \mu_n).$$
We recall that both $\cR_1+i\cR_2$ and $\cR_1-i\cR_2$ are bounded and continuously invertible operators in $L^p(\C)$, $1<p<\infty$. Moreover, we have just seen that $T_{\mu_n}$ is boundedly invertible in $L^\frac2\alpha(\C)$ with bounds depending only on $\|\mu_n\|_{L^\infty(\C)}$ and $\|\mu_n\|_{W^{\alpha,\frac2\alpha}(\C)}$. However, each $\|\mu_n\|_\infty$ (and respectively $\|\mu_n\|_{W^{\alpha,\frac2\alpha}(\C)}$) is bounded by a constant multiple of $\|\mu\|_\infty$ (respectively $\|\mu\|_{W^{\alpha,\frac2\alpha}(\C)}$). Hence
$$\aligned
\|g_n\|_{\dot{W}^{\alpha,\frac2\alpha}(\C)}
&=\|D^\alpha g_n\|_{L^\frac2\alpha(\C)}\\
&\leq C(\alpha)\,\|(\cR_1+i\cR_2)D^\alpha g_n\|_{L^\frac2\alpha(\C)}\\
&\leq C\left(\alpha,\|\mu\|_{L^\infty(\C)},\|\mu\|_{W^{\alpha,\frac2\alpha}(\C)}\right)\,\|T_{\mu_n}(\cR_1+i\cR_2) (D^\alpha g_n)\|_{L^\frac2\alpha(\C)}\\
&\leq C\left(\alpha,\|\mu\|_{L^\infty(\C)},\|\mu\|_{W^{\alpha,\frac2\alpha}(\C)}\right)\,\|(\cR_1-i\cR_2) D^\alpha \mu_n\|_{L^\frac2\alpha(\C)}\\
&\leq C\left(\alpha,\|\mu\|_{L^\infty(\C)},\|\mu\|_{W^{\alpha,\frac2\alpha}(\C)}\right).
\endaligned$$
It then follows that $g_n$ is a bounded sequence in $\dot{W}^{\alpha,\frac2\alpha}(\C)$. By the Banach-Alaoglu theorem there exists $h\in \dot{W}^{\alpha,\frac2\alpha}(\C)$ such that
$$\lim_{n\to\infty}\langle g_n,\varphi\rangle=\langle h,\varphi\rangle$$
for each $\varphi\in W^{-\alpha,\frac2{2-\alpha}}(\C)$. Remarkably, by the weak lower semicontinuity of the norm,
$$
\|h\|_{\dot{W}^{\alpha,\frac2\alpha}(\C)}=\|D^\alpha h\|_{L^\frac2\alpha(\C)}\leq \liminf_{n\to\infty}\|D^\alpha g_n\|_{L^\frac2\alpha(\C)}\leq C\left(\alpha,\|\mu\|_{L^\infty(\C)},\|\mu\|_{W^{\alpha,\frac2\alpha}(\C)}\right).
$$
Incidentally, we already knew from the classical theory that $\phi_n$ converges in $W^{1,p}_{loc}(\C)$ to the principal quasiconformal map $\phi$ associated to $\mu$. In particular, modulo subsequences, $\d\phi_n$ converges to $\d\phi$ almost everywhere. But then $g_n$ converges almost everywhere to $\log(\d\phi)$. It then follows that $\log(\d\phi)=h$ and so we deduce that $\log(\d\phi)$ belongs to $\dot{W}^{\alpha,\frac2\alpha}(\C)$, with the same bound than $h$. The theorem follows.
\end{proof}

\noindent
A. L. Bais\'on, A. Clop, J. Orobitg\\
Departament de Matem\`atiques\\
Universitat Aut\`onoma de Barcelona\\
08193-Bellaterra (Catalonia)


\begin{thebibliography}{bib}

\bibitem{Ahl} L. Ahlfors, \emph{Lectures on quasiconformal mappings}. Second ed., University Lecture Series, 38, 
American Mathematical Society, Providence, RI, 2006.

\bibitem{AIM} K.\ Astala, T.\ Iwaniec and G.\ Martin, {\em Elliptic Equations and Quasiconformal Mappings in the Plane,} Princeton Mathematical Series, vol. 47, Princeton University Press, 2009.

\bibitem{AIPS}  K. Astala, T. Iwaniec, I. Prause, E. Saksman, \emph{Bilipschitz and quasiconformal rotation, stretching and multifractal spectra}, Publ. Math de l'IHÉS. September 2014.

\bibitem{CC} A. Clop, V. Cruz, \emph{Weighted estimates for Beltrami equations}, Ann. Acad. Sci. Fenn. Math. \textbf{38} (2013), no. 1, 91?113.

\bibitem{CFMOZ} A. Clop, D. Faraco, J. Mateu, J. Orobitg, and X. Zhong, \emph{Beltrami equations with coefficient in the Sobolev Space $W^{1,p}$}, Publ. Mat. \textbf{53} (2009), 197-230.

\bibitem{DH} D. H. Hamilton \emph{BMO and Teichm\"uller space}, Ann. Acad. Sci. Fenn. Ser. A I Math. \textbf{13}, no. 2 (1989), 213--224.

\bibitem{I} T. Iwaniec, \emph{$L^p$-theory of quasiregular mappings}, Quasiconformal space mappings, volume 1508 of Lecture Notes in Math., pp 39--64. Springer, Berlin, 1992.

\bibitem{KPV} C. E. Kenig, G. Ponce, and L. Vega, \emph{Well-posedness and scattering results for the generalized Korteweg-de-Vries equation via the contraction principle}, Comm. Pure App. Math. (1993), 46,
no. 4, 527--620.

\bibitem{Ko} A. Koski, \emph{Singular integrals and Beltrami type operators in the plane and beyond.
Master Thesis}, Department of Mathematics, University of Helsinki, 2011.

\bibitem{KLi} S.G.Krantz, Song-Ying Li, \emph{Boundedness and compactness of integral operators on spaces of homogeneous type and applications, II},  J. Math. Anal. Appl. 258 (2001), no. 2, 642--657.

\bibitem{R} H. M. Reimann \emph{Functions of bounded mean oscillation and quasiconformal mappings}, Comment. Math. Helv. \textbf{49} (1974), 260-276.

\bibitem{RR} H.M. Reimann and T. Rychener, \emph{Funktionen beschr\" ankter mittlerer Oszillation}, Lecture Notes in Mathematic. 1975.

\bibitem{RuSic} T.\ Runst and W.\ Sickel, {\em Sobolev spaces of fractional order, Nemytskij operators, and nonlinear partial differential equations}, de Gruyter Series in Nonlinear Analysis and Applications, 3. Walter de Gruyter \& Co., Berlin, 1996.

%\bibitem{H} S. Hofmann, \emph{\"\,An off-diagonal T1 theorem and applications\,\"\,}, with an appendix The Mary Weiss Lemma 
%by Loucas Grafakos and the autor, J. Funct. Anal. \textbf{160} (1998), 581-622.
%\bibitem{KIM} K. Astala, T. Iwaniec and G. Martin, \emph{\" \,Elliptic Partial Difeferential Equations and Quasiconformal Mappings in the Plane\,\"}, Princeton University Press, Princeton, NJ, 2009.
%\bibitem{TAC} Albert Clop, \emph{\"\,Singularitats evitables per a funcions quasiregulars del pla\"\,.}. Tesis Doctoral.
%\bibitem{VC} V\' ictor Cruz, \emph{\"\,Soluciones de la ecuaci\' on de Beltrami con coeficiente regular\"\,}. Tesis Doctoral.
%%\bibitem{ACVC} A. Clop and V. Cruz, \emph{\"\, Weighted estimates for Beltrami equations\" }. 
%\bibitem{CALD} A. Clop, D. Faraco and Alberto Ruiz, \emph{\"\,Stability of Calder\' on's inverse conductivity problem in the plane for discontinuous conductivities\"}. Inverse Problems and Imaging, Volume 4, No. 1, 2010, 49-91. 
%\bibitem{SHPK} S. Hencl and P. Koskela, \emph{\"\,Regularity of the inverse of a Planar Sobolev Homeomorphism\"}. Arch. Rational Mech. Anal. 180(2006) 75-95.
%\bibitem{MUQY} S. M\" uller, T. Qi and B.S. Yan, \emph{\"\,On a new class of elastic deformations not allowing for cavitation\"}, Annales de l'I.H.P., section C, tome 11, $n^{\underline{o}}$ 2(1994), p. 217-243.
%\bibitem{KIS} K. Astala, T. Iwaniec and E. Saksman, \emph{\"\,Beltrami Operators in the Plane\"}.
%\bibitem{MOR} C. B. Morrey, \emph{\"\, On the solutions of quasi-linear elliptic partial differential equartions\"\,}. Trans. Amer. Math. Soc., \textbf{43}(1938), 126-166.
%\bibitem{DH} D. R. Adams and L. I. Hedberg, \emph{\"\,Function Spaces and Potential Theory\"}. Grundlehren der mathematuschen Wissenschaften.
%\bibitem{AF} R. A. Adams and J. F. Fournier, \emph{\"\,Sobolev Spaces\"}.
%\bibitem{Stein} E. M. Stein. \emph{\"\,Singular Intregrals and Differentiability Properties of Function\"}. Princenton University Press.
%
%\bibitem{IS} T. Iwaniec and V. Sverak. \emph{\"\,On Mapping with Integrable Dilatation\"}.
%\bibitem{FRE1} \emph{\"\,Notex on Fredholm (and compact) operators\".} Universiteit Utrecht.
%\bibitem{FRE2} Camila Frantzen \emph{\"\,Diffun2, Fredholm Operators\"}. 
%\bibitem{BFL} L. Baratchart, Y. Fischer and J. Leblond. \emph{\"\, Dirichlet/Neumann problems and Hardy classes for the planar conductivity equation\"}.
%\bibitem{HB} Haim Brezis. \emph{\"\, Functional analysis, Sobolev spaces and partial differential equations\"}. Universitext. Springer, New York, 2011.
%\bibitem{KPV} C. E. Kenig, A. Ponce and L. Vega, \emph{\"\, Well-posedness and scattering results for the generalized Korteweg-de Vries equation via the contraction principle.\"} Comm. Pure Appl. Math., \textbf{46} (1993), 527-620.
%\bibitem{AP} A. Passarelli di Napoli. \emph{\"\,Higher differentiability of solutions of elliptic systems with Sobolev coefficients: the case $p=n=2$\"}. Potential Analysis. October 2014, Volume 41, Issue 3, pp 715-735. 
%\bibitem{MCAP} M. Carozza and A. Passarelli di Napoli. \emph{\"\,On very weak solutions of a class of nonlinear elliptic systems\"\,.} Comm. Mathematicae Universitatis Carolinae, Vol. 41 (2000), No. 3, 493--508.
%\bibitem{EG} E. Giusti. \emph{\"\,Direct Methods in the calculus of variations\"\,.} World Scientific, 2003.
%\bibitem{ISb} T. Iwaniec amd C. Sbordone. \emph{\"\, Weak minima of variational integrals\"\,.} J. Reine Angew. Math. \textbf{454} (1994), 143-161.
%\bibitem{CC} C. Capone, and A. Fiorenza. \emph{\"\, On small Lebesgue Spaces\"\,.} Journal of Function Spaces and Applications. Vol 3, Number 1 (2005), 73,89.

\end{thebibliography}
\end{document}